\newtheorem*{lemma}{Lemma} 
\begin{document}

\date{\small Mathematics Subject Classification: 11B65, 33B99}

\title{A Simple Proof of Schmidt's Conjecture}

\author{Thotsaporn ``Aek'' Thanatipanonda
\thanks{supported by the strategic program ``Innovatives O\"{O} 2010 plus"
by the Upper Austrian Government}\\
\small{Research Institute for Symbolic Computation (RISC)}\\[-0.8ex]
\small{Johannes Kepler University, A-4040 Linz, Austria}\\
\small{\tt thotsaporn@gmail.com}%
}

\maketitle

\begin{abstract}
Using Difference Equations and Zeilberger's algorithm, 
we give a very simple proof of a conjecture of Asmus Schmidt 
that was first proved by Zudilin.

\end{abstract} %

For any integer $r \geq 1$, the sequence of numbers $\{{c^{(r)}_{k}}\}_{k \geq 0} $
is defined implicitly by
\[
\sum_k\binom{n}{k}^r\binom{n+k}{k}^r =
\sum_k\binom{n}{k}\binom{n+k}{k}c^{(r)}_k,\quad n=0,1,2,\dots
\]
In 1992, Asmus Schmidt \cite{Schmidt} conjectured 
that all $c^{(r)}_k$ are integers.
In Concrete Mathematics \cite{Knuth} on page 256,
it was stated as a research problem.
Already here, it was indicated that H. Wilf had shown
the integrality of $c^{(r)}_n$ for any $r$ but only
for $n \leq 9$.
For the first nontrivial case,
$r=2$; $\sum_k \binom{n}{k}^2\binom{n+k}{k}^2$
are the famous Ap\'ery numbers,
the denominators of rational approximations to $\zeta(3)$.
This case was proved in 1992 independently by
Schmidt himself \cite{Schmidt1} and by Strehl~\cite{Strehl}.
They both gave an explicit expression for $c^{(2)}_n$
\[
    c^{(2)}_n = \sum_j
\binom{n}{j}^3
    = \sum_j
\binom{n}{j}^2\binom{2j}{n}.
\]
These numbers are called Franel numbers.
In the same paper \cite{Strehl}, Strehl also gave a proof
for $r=3$ which uses Zeilberger's algorithm
of creative telescoping.
He also gave an explicit expression for $c^{(3)}_n$
\[
    c^{(3)}_n = \sum_j
\binom{n}{j}^2\binom{2j}{j}^2\binom{2j}{n-j}.
\]
The first full proof was given by Zudilin \cite{Zudilin}
in 2004 using a multiple generalization
of Whipple's transformation for hypergeometric
functions.
Since then, the congruence properties related to
the Schmidt numbers $S^{(r)}_n := \sum_{k}\binom{n}{k}^r\binom{n+k}{k}^r$
and to the Schmidt polynomials $S^{(r)}_n(x) :=
\sum_{k}\binom{n}{k}^r\binom{n+k}{k}^{r}x^k$
have been studied extensively.
In this note, we return to Schmidt's original problem
and present a simple proof.
\medskip

It is a natural first step to investigate the
individual term $\binom{n}{k}^r\binom{n+k}{k}^r$
before considering the full sum
$\sum_k\binom{n}{k}^r\binom{n+k}{k}^r$.
Our proof rests on the following lemma,
which was proved by Guo and Zeng \cite{Guo2}.
In order to keep this note self-contained,
we give a simple, well motivated,
computer proof of their lemma.

\begin{lemma}
For $k \geq 0$ and $r \geq 1$,
there exist integers $a^{(r)}_{k,j}$
with $a^{(r)}_{k,j}=0$ for $j < k$ or $j > rk$, and
\begin{equation} \label{main}
\displaystyle\binom{n}{k}^r\binom{n+k}{k}^r
= \sum_j a^{(r)}_{k,j}
\displaystyle\binom{n}{j}\binom{n+j}{j}
\end{equation}
for all $n\geq0$.
\end{lemma}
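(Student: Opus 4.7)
The plan is to expand $\binom{n}{k}^r\binom{n+k}{k}^r$ in the basis $\{g_j(n) := \binom{n}{j}\binom{n+j}{j}\}_{j \ge 0}$, pin down its support, and prove integrality of the coefficients by induction on $r$. The key preliminary observation is that $\binom{n}{k}\binom{n+k}{k} = \binom{2k}{k}\binom{n+k}{2k}$ is invariant under the involution $n \mapsto -n-1$, and hence is a polynomial of exact degree $k$ in $u := n(n+1)$; in particular $\{g_j(n)\}_{j\ge 0}$ is a $\mathbb{Q}$-basis of $\mathbb{Q}[u]$, and the coefficients $a^{(r)}_{k,j}$ exist and are uniquely determined in $\mathbb{Q}$. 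Comparing degrees in $u$ forces $j \le rk$, while substituting $n = 0, 1, \ldots, k-1$ into \eqref{main} (where the left-hand side vanishes) yields an upper-triangular linear system with nonzero diagonal $g_l(l) = \binom{2l}{l}$, forcing $a^{(r)}_{k,j} = 0$ for $j < k$.

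For integrality I would argue by induction on $r$, the case $r = 1$ being trivial. For the step $r \to r+1$, the factorization
\[
\binom{n}{k}^{r+1}\binom{n+k}{k}^{r+1} = g_k(n) \sum_j a^{(r)}_{k,j}\, g_j(n)
\]
reduces the problem to expressing each product $g_k(n)\, g_j(n)$ as a $\mathbb{Z}$-linear combination of $g_l(n)$'s. Computing small cases strongly suggests the clean product formula
\[
g_k(n)\, g_j(n) \;=\; \sum_{l=\max(k,j)}^{k+j} \binom{l}{k}\binom{l}{j}\binom{k+j}{l}\, g_l(n), \qquad (\ast)
\]
from which the inductive step follows at once with the explicit nonnegative-integer recursion $a^{(r+1)}_{k,l} = \sum_j a^{(r)}_{k,j}\binom{l}{k}\binom{l}{j}\binom{k+j}{l}$, supported in $k \le l \le (r+1)k$.

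The main obstacle is establishing $(\ast)$. Both sides are polynomials in $u$ of degree at most $k+j$, so it suffices to verify $(\ast)$ at the $k+j+1$ integer points $n = 0, 1, \ldots, k+j$; using $g_l(n) = \binom{n}{l}\binom{n+l}{l}$, this reduces $(\ast)$ to the single-sum hypergeometric identity
\[
\binom{n}{k}\binom{n+k}{k}\binom{n}{j}\binom{n+j}{j}
\;=\; \sum_l \binom{l}{k}\binom{l}{j}\binom{k+j}{l}\binom{n}{l}\binom{n+l}{l}
\]
in the summation index $l$, which is exactly the sort of identity that Zeilberger's algorithm of creative telescoping is designed to certify mechanically. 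So the real difficulty lies in guessing $(\ast)$ in the first place: I would obtain it from experimental data (for instance, evaluating $g_k g_j$ at $n = j$ immediately gives the coefficient of $g_j$ as $\binom{j}{k}\binom{j+k}{k}$, and extrapolating to the other $l$'s suggests the symmetric triple product), and once $(\ast)$ is conjectured its verification is essentially mechanical.
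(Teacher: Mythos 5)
Your proposal is correct and is essentially the paper's proof: both reduce the induction step $r\to r+1$ to linearizing the product $\binom{n}{k}\binom{n+k}{k}\binom{n}{j}\binom{n+j}{j}$ in the basis $\binom{n}{l}\binom{n+l}{l}$ with integer coefficients certified by Zeilberger's algorithm, and your coefficients $\binom{l}{k}\binom{l}{j}\binom{k+j}{l}$ coincide identically with the paper's $\binom{j+l'}{l'}\binom{j}{l-l'}\binom{l}{j}$ form via $\binom{k+j}{l}\binom{l}{j}=\binom{k+j}{j}\binom{k}{l-j}$. Your preliminary observations (that the $g_j$ form a basis of the polynomials in $u=n(n+1)$, which pins down uniqueness and the support $k\le j\le rk$) are a nice addition the paper omits, but the core argument is the same.
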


\begin{proof}
Define $\bar{a}^{(r)}_{k,j}$ recursively by
$\bar a^{(1)}_{k,k}=1$, $\bar a^{(1)}_{k,j}=0$ ($j\neq k$) and
\begin{equation} \label{magic}
\displaystyle \bar{a}^{(r+1)}_{k,j}
= \sum_{i}
\binom{k+i}{i}\binom{k}{j-i}\binom{j}{k}
\bar{a}^{(r)}_{k,i}.
\end{equation}
Then it is clear that $\bar{a}^{(r)}_{k,j}$ are integers.

We show by induction on $r$ that $\bar{a}^{(r)}_{k,j}$ satisfies \eqref{main}.
The statement is clearly true for $r=1$.
Suppose the statement is true for $r$. Then
\begin{align*}
 \sum_{j}\bar{a}^{(r+1)}_{k,j}
            \binom{n}{j}\binom{n+j}{j}
          & = \sum_{j}\sum_{i}
                \bar{a}^{(r)}_{k,i}
                \binom{k+i}{i}\binom{k}{j-i}\binom{j}{k}
                \binom{n}{j}\binom{n+j}{j}
                \\
          &\kern150pt\text{\small (by definition of $\bar{a}^{(r+1)}_{k,j}$)} \\
          & = \sum_{i}\bar{a}^{(r)}_{k,i}\sum_{j}
                \binom{k+i}{i}\binom{k}{j-i}\binom{j}{k}
                \binom{n}{j}\binom{n+j}{j} \\
          & = \sum_{i}\bar{a}^{(r)}_{k,i}
            \binom{n}{i}\binom{n+i}{i}
            \binom{n}{k}\binom{n+k}{k}   \\
          &  = \binom{n}{k}^{r}\binom{n+k}{k}^{r}\binom{n}{k}\binom{n+k}{k}\\
          &\kern150pt\text{\small (by induction hypothesis)}  \\
          &  = \binom{n}{k}^{r+1}\binom{n+k}{k}^{r+1}. \\
\end{align*}
The identity from line 2 to line 3,
\[
\binom{n}{i}\binom{n+i}{i}
\binom{n}{k}\binom{n+k}{k}
= \sum_{j} \binom{k+i}{i}\binom{k}{j-i}\binom{j}{k}
\binom{n}{j}\binom{n+j}{j},
\]
can be verified easily with Zeilberger's algorithm.

Therefore $\bar{a}^{(r)}_{k,j}$ satisfies \eqref{main}.
For the lemma, we can now take $a^{(r)}_{k,j} = \bar{a}^{(r)}_{k,j}$.
\end{proof}

The definition \eqref{magic} may seem to come out
of nowhere. It was found as follows.
We tried to find a relation of the form:
\[
a^{(r+1)}_{k,j}
= \sum_{i} s(k,j,i)
a^{(r)}_{k,i}.
\]
with the hope to find a nice formula for $s(k,j,i)$,
free of $r$.
The coefficients $s(k,j,i)$ then were found
by automated guessing.
First we calculated the numbers $a^{(r)}_{k,j}$
for $r$ from $1$ to $15$ and all $k,j$. Then we made
an ansatz for a hypergeometric term $s(k,j,i)$.
Fitting this ansatz to the
calculated data and solving the
constants led to the conjecture
\[
s(k,j,i) = \displaystyle\binom{k+i}{i}\binom{k}{j-i}\binom{j}{k}.
\]

Now we give a proof of the main statement.
By the lemma, we have
\begin{align*}
\sum_i \binom{n}{i}^r\binom{n+i}{i}^r
          & = \sum_i\sum_k a^{(r)}_{i,k}
          \binom{n}{k}\binom{n+k}{k}
           = \sum_k
          \binom{n}{k}\binom{n+k}{k}\sum_i a^{(r)}_{i,k}.
\end{align*}
Therefore, we have
\[
c^{(r)}_k = \sum_i a^{(r)}_{i,k}.
\]
which concludes our statement.
\subsection*{Acknowledgement}
I want to thank Veronika Pillwein and Manuel Kauers
for their helpful suggestions and support.


\begin{thebibliography}{9}

\bibitem{Knuth} R.L. Graham, D.E. Knuth and O. Patashnik,
{\em Concrete mathematics. A foundation for computer science},
2nd edition, Addison-Wesley Publishing Company,
Reading, MA, 1994.

\bibitem{Guo1} V.J.W. Guo, F. Jouhet and J. Zeng,
{\em Factors of alternating sums of products of binomial
and $q$-binomial coefficients}, 
Acta Arith. {127}(2007), 17-31.

\bibitem{Guo2} V.J.W. Guo and J. Zeng,
{\em Proof of some conjectures of Z.-W.Sun on
congruences for Apery polynomials},
J. Number Theory {132}(2012), 1731-1740.

\bibitem{Guo3} V.J.W. Guo and J. Zeng,
{\em On Zudilin's $q$-question about 
Schmidt's problem},
The electronic journal of Combinatorics. {\bf19(3)}(2012), \#P4.

\bibitem{Schmidt} Asmus Schmidt,
{\em Generalized q-Legendre polynomials},
J. Comput. Appl. Math. {\bf49:1-3} (1993), 243-249.

\bibitem{Schmidt1} Asmus Schmidt,
{\em Legendre transforms and Ap\'ery's sequences},
J. Austral. Math. Soc. Ser. A{\bf58:3} (1995), 358-375.

\bibitem{Strehl} Volker Strehl,
{\em Binomial Identities-combinatorial and algorithmic aspects},
Discrete Math. {\bf136:1-3} (1994), 309-346.

\bibitem{Zudilin} Wadim Zudilin, {\em On a
combinatorial problem of Asmus Schmidt},
The electronic journal of Combinatorics. {\bf11} (2004), \#R22.



\end{thebibliography}
\end{document}